\newtheorem{theorem}{Theorem}[section]
\newtheorem{corollary}[theorem]{Corollary}
\theoremstyle{definition}
\newtheorem{remark}[theorem]{Remark}
\numberwithin{equation}{section}
\newcommand{\C}{{\mathbb C}}
\newcommand{\Z}{{\mathbb Z}}
\newcommand{\F}{{\mathbb F}}
\newcommand{\bigslant}[2]{{\raisebox{.1em}{$#1$}\left/\raisebox{-.1em}{$#2$}\right.}}
\newtheorem{Theorem}{Theorem}[section]
\newtheorem{Theorem--Definition}[Theorem]{Theorem--Definition}
\newtheorem{Corollary}[Theorem]{Corollary}
\newtheorem{Lemma}[Theorem]{Lemma}
\newtheorem{Lemma--Definition}[Theorem]{Lemma--Definition}
\newtheorem{Proposition--Definition}[Theorem]{Proposition--Definition}
\newtheorem{Remark}[Theorem]{Remark}
\newtheorem{Remark--Definition}[Theorem]{Remark--Definition}%
\address{}
\begin{document}
\title{Crossed Products and Coding Theory}
\vspace{.4in}
\author{Yuval Ginosar}

\address{Department of Mathematics, University of Haifa, Haifa 3498838, Israel}
\email{ginosar@math.haifa.ac.il}
\author{Aviram Rochas Moreno}
\email{cyclemen@gmail.com}

\maketitle

Families of codes such as group codes, constacyclic and skew cyclic codes, some of which independently suggested in the literature,
turn out to be special instances of the general family of crossed product codes.
Hamming-metric is a main feature of ambient code spaces which is used to evaluate the efficiency of their various codes.

This note aims at classifying the ambient spaces of crossed products up to Hamming-isometry.
In \S\ref{back} the structure of crossed products and their connection to coding theory is briefly surveyed,
and the regularity property of relative semisimplicity is explained.
In \S\ref{ham} we recall the notion of Hamming-metric, and establish a criterion
for two crossed products of a group $G$ over a base ring $R$ to be isometric in terms of a certain $G$-automorphism action on the second cohomology of $G$ with coefficients in $R^*$
(Corollary \ref{cor}). This classification is demonstrated in \S\ref{ex} by two families of examples, namely crossed products of cyclic groups over finite fields (Theorem \ref{121cyclic}),
and complex twisted group algebras
of elementary abelian groups (Theorem \ref{complete}). We also determine when crossed products belonging to these families are (relative) semisimple, and in particular,
when they admit only trivial codes.

\section{Crossed products over commutative rings}\label{back}
Let $G$ be a finite group and let $R$ be a commutative ring with 1, whose multiplicative group of units is denoted by $R^*$.
A {\it crossed product} $R*G$ is an associative $G$-graded ring over the base ring $R$ which admits an invertible element in each homogeneous component.
In other words, there is a set of units $\{u_g\}_{g\in G}$ satisfying
\begin{equation}\label{GGA}
R*G=\oplus_{g\in G}Ru_g,
\end{equation}  such that
\begin{equation}\label{GGA1}
 Ru_g\cdot Ru_h=Ru_{gh}, \forall g,h\in G.
\end{equation}
Equation \eqref{GGA} says that any element in $R*G$ is written as $\sum_{g\in G}\beta _gu_g$ with uniquely determined coefficients $\{\beta_g\}_{g\in G}\subset R$.
Equation \eqref{GGA1} says that for every $g,h\in G$
\begin{equation}\label{2coc}
f(g,h):=u_gu_hu_{gh}^{-1}\in R^*.
\end{equation}

It is easily verified that $G$ acts on the ring $R$ by the rule
\begin{equation}\label{Rmod}
g(r):=u_gru_g^{-1},\ \ g\in G, r\in R,
\end{equation}
and that \eqref{GGA} describes a decomposition of $R*G$ as a lattice (free module) over its subring $R$ with a {\it $G$-graded basis} $\{u_g\}_{g\in G}$ over $R$.
The associativity of $R*G$ entails the {\it 2-cocycle condition} on the 2-place function $f:G\times G\to R^*$ given in \eqref{2coc}, that is
$f\in Z^2(G,R^*)$, where $R^*$ is a left $G$-module via \eqref{Rmod}.
Another choice of a graded basis $\{u'_g\}_{g\in G}$ does not change the action of $G$ on $R$, but yields a 2-cocycle $f':G\times G\to R^*$ which differs from $f$ by a {\it coboundary}.
This means that
$[f']=[f]\in H^2(G,R^*)$, where $H^2(G,R^*)$ is the second cohomology group of $G$ with coefficients in $R^*$.
Note that for the trivial element $e\in G$, the invertible element $u_e$ can be chosen to be $1$. In this case, the corresponding 2-cocycle $f\in Z^2(G,R^*)$ satisfies
$f(g,e)=f(e,g)=1\in R^*,$ for all $g\in G,$
and is called {\it normalized}.
When we want to stress the $G$-action on $R$
\begin{equation}\label{action}
\eta:G\to\text{Aut}(R)
\end{equation}
and the cocycle $f\in Z^2(G,R^*)$, we denote the corresponding crossed product by $R_{\eta}^f*G$.

Certain families of crossed products draw special attention. When
the $G$-module structure \eqref{Rmod} on the base ring $R$ is trivial, in other words $R$ is central in $R*G$, then the crossed product
$R_{\eta}^f*G$, or just $R^f*G$, is called a {\it twisted group ring.}
On the other hand, when the 2-cocycle $f\in Z^2(G,R^*)$ given in \eqref{2coc} is identically 1, $R_{\eta}^1*G$ is called a {\it skew group ring}.
A skew group ring which is also a twisted group ring is just an (ordinary) group ring.
An important family of crossed products arises when $R$ is a field and \eqref{action} describes a Galois action whose fixed field is $k$. In this case $R_{\eta}^f*G$ is
$k$-central simple and is called a {\it classical} crossed product.

Ideals of crossed products $R*G$ which are lattices over $R$ may be considered as special instances of {\it codes}.
Indeed, twisted group ring codes have already been investigated
in, e.g. \cite{HP,H01}. These generalize both the well studied families of group ring codes \cite{B67,G}, where the module structure and the 2-cocycles are trivial, and of {\it constacyclic} codes,
where the group $G$ is cyclic \cite{J92}. Skew group ring codes have been shown to yield good parameters for cyclic groups \cite{BGU}. The combination of nontrivial actions and nontrivial 2-cocycles
have also been discussed in the cyclic case \cite{BSU,DNS}. Classical crossed product codes were investigated in, e.g. \cite{SRS}.

To end this section, we point out that the coding theory of a crossed product $R*G$ significantly depends on the condition whether all its ideals which are $R$-lattices are projective as $R*G$-modules.
Crossed products satisfying this condition, termed {\it relative semisimple} in \cite{AGO}, are uniquely decomposed as a direct sum of their irreducible codes.
Relative semisimplicity of twisted group rings over the quotient rings $\Z/p^s\Z$ are studied in \cite{AGO}. When the base ring $R$ is semisimple itself,
then relative semisimplicity of $R*G$ is the same as semisimplicity. A complete criterion for semisimplicity of crossed products is given in \cite[Theorem 3.3]{AGdR}.
\section{Isometries under the Hamming metric}\label{ham}
The main goal of this section is to present a necessary and sufficient cohomological condition for isometry of crossed products (see Corollary \ref{cor}).
Recall that an $R$-basis $\mathcal{B}$ of any $R$-lattice $M$
determines a {\it Hamming weight}
$$\begin{array}{rcl}
\mathcal{H}_{\mathcal{B}}:M &\to &\mathbb{N}\\
\sum_{b\in\mathcal{B}}r_bb& \mapsto & |\{b|\ \ r_b\neq 0\}|,
\end{array}$$
which, in turn, furnishes $M$ with a metric space structure. Note that the unit sphere in the metric space $(M,\mathcal{B})$ is the set of nonzero ``monomials" $\{rb\}_{r\in R,b\in \mathcal{B}}$.
The quality of a code, given as a  sublattice of the based $R$-lattice $(M,\mathcal{B})$, is measured by the minimal Hamming distance between its elements.

An {\it isometry} between two based $R$-lattices $(M,\mathcal{B})$ and $(M',\mathcal{B}')$ is an invertible $R$-module morphism $\rho:M\to M'$ that satisfies
\begin{equation}\label{iso}
\mathcal{H}_{\mathcal{B}'}(\rho(m))=\mathcal{H}_{\mathcal{B}}(m)
\end{equation}
for every $m\in M$. Based lattices are called isometric if such an isometry does exist.
A based lattice can be mapped isometrically to itself.
We say that an $R$-module automorphism $\rho:M\to M$ is an isometry of $(M,\mathcal{B})$ if
\begin{equation}\label{autometry}
\mathcal{H}_{\mathcal{B}}(\rho(m))=\mathcal{H}_{\mathcal{B}}(m),\ \  \forall m\in M.
\end{equation}
We stress that the base $\mathcal{B}$ determines the metric in both sides of \eqref{autometry}.
The isometries of a based $R$-lattice $(M,\mathcal{B})$ evidently form a group under the decomposition operation.

Any isometry $\rho:M\to M'$ between $(M,\mathcal{B})$ and $(M',\mathcal{B}')$ maps the unit sphere of one space onto the unit sphere of the other,
hence yields
a (unique) bijection $\rho':\mathcal{B}\to\mathcal{B}'$
such that
\begin{equation}\label{isometry}
\rho(b)=r_b\rho'(b),\ \ \forall b\in \mathcal{B}
\end{equation}
for some $r_b\in  R^*$.
In fact, given a bijection $\rho':\mathcal{B}\to\mathcal{B}'$ and invertible coefficients $r_b\in  R^*$, condition \eqref{isometry}
is also sufficient for a based $R$-module morphism $\rho:(M,\mathcal{B})\to (M',\mathcal{B}')$ to be an isometry.

Clearly, isometric lattices $M$ and $M'$ are of the same rank, say $n$.
Denote the bases elements $\mathcal{B}=\{b_i\}_{i=1}^n$, and $\mathcal{B}'=\{b'_i\}_{i=1}^n.$
Then the bijection $\rho'$ determines a permutation in the symmetric group $\Sigma_n$ on $n$ elements.
By \eqref{isometry}, the group $\Gamma_n$ of isometries of a based $R$-lattice $(M,\mathcal{B})$ of rank $n$ is generated by two subgroups,
namely the above symmetry group $\Sigma_n$, and the group of invertible diagonal isometries given by $n$-tuples $(r_{b_1},\ldots,r_{b_n})\in (R^*)^n$ as in \eqref{isometry}
(with the identity permutation).
More precisely, $\Gamma_n$ is the wreath product
\begin{equation}\label{monomial}
\Gamma_n=R^*\wr\Sigma_n=(R^*)^n\rtimes\Sigma_{n},
\end{equation}
where $\Sigma_n$ acts on $n$-tuples of $R^*$ by permutations. The group $\Gamma_n$ is called the {\it monomial} group of the lattice $R^n$.

An $R$-lattice $M$ can be equipped with an additional ring structure (admitting $R$ as a subring).
In this case, a code in the ambient space $M$ is usually considered as an ideal of $M$ which is also an $R$-sublattice. An isometry
of based $R$-lattice rings is a based $R$-lattices isometry which is also a morphism of rings.

Crossed products $R_{\eta}^f*G$ are rings with Hamming metric, implicitly determined through the graded basis $\{u_g\}_{g\in G}$.
In order to study isometries of these metric spaces we need the following notion.
Let $G_1$ and $G_2$ be two groups acting on a commutative ring $R$ via
\begin{equation}\label{2actions}
\eta_i:G_i\to\text{Aut}(R),\ \  i=1,2.
\end{equation}
A group morphism $\psi:G_1\to G_2$ is {\it $(\eta_1,\eta_2)$-compatible} if
\begin{equation}\label{compat}
\eta_2\circ\psi=\eta_1.
\end{equation}
The actions \eqref{2actions} are {\it compatible} if there exists an $(\eta_1,\eta_2)$-compatible group isomorphism between $G_1$ and $G_2$.
For every $(\eta_1,\eta_2)$-compatible morphism $\psi:G_1\to G_2$ and $f\in Z^2(G_2,R^*)$, define
\begin{eqnarray}\label{defact}\begin{array}{rcl}
{\psi}(f):G_1\times G_1&\to &R^*\\
(g,g')&\mapsto& f({\psi}(g),{\psi}(g')).
\end{array}\end{eqnarray}
Then a direct computation shows that the 2-place function ${\psi}(f)$ is in $Z^2(G_1,R^*)$.
The above settings give a sufficient condition for two crossed products over $R$ to be isometric.
\begin{Lemma}\label{cpiso}
Let $G_1$ and $G_2$ be two groups acting on a commutative ring $R$ via \eqref{2actions}.
Suppose that there exists an $(\eta_1,\eta_2)$-compatible group isomorphism $\psi:G_1\xrightarrow{\cong} G_2$ such that $\psi(f_2)$ is cohomologous to $f_1$.
 Then the crossed products $R_{\eta_1}^{f_1}*G_1$ and $R_{\eta_2}^{f_2}*G_2$ are isometric.
\end{Lemma}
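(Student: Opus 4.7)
The plan is to construct an $R$-linear ring isomorphism
\[
\rho : R_{\eta_1}^{f_1} * G_1 \;\longrightarrow\; R_{\eta_2}^{f_2} * G_2
\]
of the form $\rho(u_g) = c_g\, u_{\psi(g)}$ for suitable units $c_g \in R^*$. Because $\psi$ is a bijection and every $c_g$ is invertible, such a $\rho$ is automatically an isometry of based $R$-lattices by the characterization following \eqref{isometry}; hence the entire content of the lemma lies in choosing the $c_g$ so that $\rho$ is multiplicative.

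The cochain $c : G_1 \to R^*$, $g \mapsto c_g$, is extracted from the cohomology hypothesis. Since $f_1$ and $\psi(f_2)$ represent the same class in $H^2(G_1, R^*)$ --- with $R^*$ viewed as a $G_1$-module via $\eta_1 = \eta_2 \circ \psi$ --- unwinding the definition of a coboundary produces $c$ satisfying
\[
f_1(g,h) \;=\; c_g \cdot \eta_1(g)(c_h) \cdot c_{gh}^{-1} \cdot \psi(f_2)(g,h)
\]
for all $g,h \in G_1$. After passing to normalized representatives on both sides, evaluation at $g=h=e$ forces $c_e = 1$, so $\rho$ sends $1$ to $1$.

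The multiplicativity check is then an essentially forced calculation. On one side,
\[
\rho(u_g u_h) \;=\; f_1(g,h)\, c_{gh}\, u_{\psi(gh)},
\]
while on the other, pushing $c_h$ past $u_{\psi(g)}$ inside $R_{\eta_2}^{f_2}*G_2$ yields
\[
\rho(u_g)\rho(u_h) \;=\; c_g\, \eta_2(\psi(g))(c_h)\, f_2(\psi(g),\psi(h))\, u_{\psi(g)\psi(h)}.
\]
The compatibility \eqref{compat} converts $\eta_2(\psi(g))$ into $\eta_1(g)$, the definition \eqref{defact} converts $f_2(\psi(g),\psi(h))$ into $\psi(f_2)(g,h)$, and the homomorphism property of $\psi$ gives $\psi(gh) = \psi(g)\psi(h)$; the two expressions then agree precisely by the coboundary identity chosen above.

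I expect the main obstacle to be purely bookkeeping: keeping straight the three pieces of twisting data (the actions $\eta_i$, the cocycles $f_i$, and the isomorphism $\psi$) and verifying that the cohomology hypothesis is exactly the condition that makes the multiplicativity equation close up. Bijectivity of $\rho$ is then automatic, since it sends the $R$-basis $\{u_g\}_{g \in G_1}$ to the $R$-basis $\{c_g u_{\psi(g)}\}_{g \in G_1}$ of the target, and the isometry property follows from \eqref{isometry}.
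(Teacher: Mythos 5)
Your proposal is correct and follows essentially the same route as the paper's proof: define $\rho$ on the graded basis by $u_g\mapsto r_g v_{\psi(g)}$ with the units $r_g$ supplied by the coboundary relating $f_1$ and $\psi(f_2)$, observe that this is an isometry by the characterization \eqref{isometry}, and verify multiplicativity using the coboundary identity together with the compatibility $\eta_2\circ\psi=\eta_1$. The only cosmetic difference is your extra normalization remark ensuring $c_e=1$, which the paper does not bother with.
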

\begin{proof}
The condition that $\psi(f_2)$ and $f_1$ are cohomologous in $Z^2(G_1,R^*)$ says that there is a map $g\mapsto r_g$ from $G_1$ to $R^*$
such that for every $g,h\in G_1$
\begin{equation}\label{e1}
r_g g(r_h)r_{gh}^{-1}\psi(f_2)(g,h)=f_1(g,h).
\end{equation}

Let $\{u_g\}_{g\in G_1}$ and $\{v_{g'}\}_{g'\in G_2}$ be the graded bases of the crossed products $R_{\eta_1}^{f_1}*G_1$ and $R_{\eta_2}^{f_2}*G_2$ respectively.
Define
$$\begin{array}{rcl}
\rho: R_{\eta_1}^{f_1}*G_1 & \to & R_{\eta_2}^{f_2}*G_2\\
\sum_{g\in G}\beta _gu_g & \mapsto & \sum_{g\in G}\beta _gr_gv_{\psi(g)}.
\end{array}$$
Then $\rho$ is clearly an $R$-module morphism.
Moreover, satisfying condition \eqref{isometry}, $\rho$ is an isometry.
It is thus left to show that it is a morphism of rings. By distributivity, it is enough to check that $\rho$ is multiplicative for monomials $\beta_gu_g,\beta_hu_h$,
where $\beta_g,\beta_h\in R$. Indeed,
\begin{eqnarray}\label{array}
\begin{array}{l}
\rho(\beta_gu_g\beta_hu_h)=\rho(\beta_gg(\beta_h)u_gu_h)=\rho(\beta_gg(\beta_h)f_1(g,h)u_{gh})=\\
=\beta_gg(\beta_h)f_1(g,h)\rho(u_{gh})=\beta_gg(\beta_h)f_1(g,h)r_{gh}v_{\psi(gh)}.
\end{array}
\end{eqnarray}
Plugging \eqref{e1} into \eqref{array} we have
\begin{equation}\label{plug}
\rho(\beta_gu_g\beta_hu_h)=\beta_gg(\beta_h)r_g g(r_h)\psi(f_2)(g,h)v_{\psi(gh)}.
\end{equation}
Now, the fact that $\psi$ is $(\eta_1,\eta_2)$-compatible \eqref{compat} says that $g$ and $\psi(g)$ admit the same action on $R$, that is
$g(r)=\psi(g)(r)$ for every $g\in G_1$ and $r\in R$.
So, by \eqref{plug}
$$\begin{array}{l}
\rho(\beta_gu_g\beta_hu_h)= \beta_g\psi(g)(\beta_h)r_g\psi(g)(r_h)\psi(f_2)(g,h)v_{\psi(gh)}=\\
=\beta_gr_g\psi(g)(\beta_hr_h)f_2(\psi(g),\psi(h))v_{\psi(g)\psi(h)}=\\
 =\beta_gr_g\psi(g)(\beta_hr_h)v_{\psi(g)}v_{\psi(h)}=\beta_gr_gv_{\psi(g)}\beta_hr_hv_{\psi(h)}=\rho(\beta_gu_g)\rho(\beta_hu_h).
\end{array}$$
Thus, $\rho$ is an isometry of crossed products.
\end{proof}
The above condition for the existence of an isometry between two crossed products turns out also to be necessary.
\begin{Lemma}\label{isocp}
Let $G_1$ and $G_2$ be two groups acting on a commutative ring $R$ via \eqref{2actions}.
Then any isometry $\rho:R_{\eta_1}^{f_1}*G_1\to R_{\eta_2}^{f_2}*G_2$ of crossed products
determines a unique $(\eta_1,\eta_2)$-compatible group isomorphism $\psi:G_1\xrightarrow{\cong} G_2$ such that $\psi(f_2)$ is cohomologous to $f_1$.
\end{Lemma}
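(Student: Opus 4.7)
The plan is essentially to reverse-engineer the calculations in Lemma \ref{cpiso}. First I would use the isometry structure to extract the candidate data: by \eqref{isometry} applied to $\rho$, there is a unique bijection between the graded bases, which I would encode as a bijection $\psi: G_1 \to G_2$ (sending $u_g$ to $v_{\psi(g)}$), together with invertible scalars $r_g \in R^*$ such that $\rho(u_g) = r_g v_{\psi(g)}$. Working with normalized cocycles so that $u_e = v_{e'} = 1$, the fact that $\rho$ is a unital $R$-module morphism forces $\psi(e) = e'$, $r_e = 1$, and $\rho|_R = \mathrm{id}_R$. The uniqueness of $\psi$ claimed in the lemma is then immediate from the uniqueness clause in \eqref{isometry}.

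Next I would extract the compatibility from the ring-morphism property. The defining relation $u_g \cdot r = \eta_1(g)(r)\, u_g$ in $R_{\eta_1}^{f_1}*G_1$, combined with multiplicativity of $\rho$ and $\rho|_R = \mathrm{id}_R$, yields
\[
\eta_1(g)(r)\, r_g v_{\psi(g)} \;=\; \rho(u_g r) \;=\; \rho(u_g)\rho(r) \;=\; r_g v_{\psi(g)}\, r \;=\; r_g\, \eta_2(\psi(g))(r)\, v_{\psi(g)}.
\]
Cancelling $v_{\psi(g)}$ using the $G_2$-grading, and then, crucially, using commutativity of $R$ to cancel $r_g$, I obtain $\eta_1(g)(r) = \eta_2(\psi(g))(r)$ for every $r \in R$, which is precisely the compatibility identity \eqref{compat}.

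I would then verify simultaneously that $\psi$ is a group homomorphism (and hence an isomorphism, being a bijection) and that $\psi(f_2)$ is cohomologous to $f_1$, by applying $\rho$ to the product relation $u_g u_h = f_1(g,h) u_{gh}$. Using compatibility to identify $\psi(g)(r_h)$ with $g(r_h)$, the two sides become scalar multiples of $v_{\psi(g)\psi(h)}$ and $v_{\psi(gh)}$ respectively; matching basis elements in the $G_2$-grading forces $\psi(gh) = \psi(g)\psi(h)$, and matching coefficients produces exactly the coboundary relation \eqref{e1} for the $1$-cochain $g \mapsto r_g$, so that $[\psi(f_2)] = [f_1]$ in $H^2(G_1, R^*)$.

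The main obstacle I anticipate is the compatibility step: cancelling $r_g$ in the intertwining identity genuinely requires commutativity of $R$ and cannot be skipped, since without it one would only obtain that $\eta_2(\psi(g))$ and $\eta_1(g)$ agree up to an inner twist by $r_g$. Once compatibility is in hand, the remaining verifications amount to reading the computations of Lemma \ref{cpiso} in reverse.
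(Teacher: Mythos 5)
Your proposal is correct and follows essentially the same route as the paper's proof: extract the bijection $\psi$ and the scalars $r_g$ from \eqref{isometry}, derive compatibility by computing $\rho(u_g r)$ in two ways, and obtain both the homomorphism property and the coboundary relation \eqref{e1} by applying $\rho$ to $u_g u_h = f_1(g,h)u_{gh}$ and matching graded components. The only (harmless) differences are the order of the steps and your explicit remarks on normalization and on the role of commutativity of $R$, which the paper leaves implicit.
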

\begin{proof}
By \eqref{isometry}, $\rho$ gives rise to a bijection $\rho':\{u_g\}_{g\in G_1}\to\{v_{g'}\}_{g'\in G_2}$ of the corresponding graded bases of the two crossed products,
as well as invertible elements $\{r_g\}_{g\in G_1}$ such that $\rho(u_g)=r_g\rho'(u_g)$ for every $g\in G_1$. Certainly, such a bijection $\rho'$ yields
a bijection of group elements $\psi:G_1\to G_2$ such that $\rho'(u_g)=v_{\psi(g)}$ for every $g\in G_1$. Thus,
$$\rho(u_g)=r_g v_{\psi(g)}, \ \ \forall g\in G_1.$$
We show that $\psi$ is a group isomorphism.
For every $g,h\in G_1$
\begin{eqnarray}\label{eq1}
\begin{array}{l}
\rho(u_gu_h)=\rho(u_g)\rho(u_h)=r_g v_{\psi(g)}r_h v_{\psi(h)}=r_g \psi(g)(r_h)v_{\psi(g)} v_{\psi(h)}=\\
r_g \psi(g)(r_h)f_2(\psi(g),\psi(h))v_{\psi(g)\psi(h)}.
\end{array}
\end{eqnarray}
On the other hand
\begin{equation}\label{eq2}
\rho(u_gu_h)=\rho(f_1(g,h)u_{gh})=f_1(g,h)\rho(u_{gh})=f_1(g,h)r_{gh} v_{\psi(gh)}.
\end{equation}
Comparing base elements and coefficients in equations \eqref{eq1} and \eqref{eq2}, we deduce that for every $g,h\in G_1$
\begin{equation}\label{eq3}
v_{\psi(g)\psi(h)}=v_{\psi(gh)},
\end{equation}
as well as
\begin{equation}\label{eq4}
r_g \psi(g)(r_h)f_2(\psi(g),\psi(h))=f_1(g,h)r_{gh}.
\end{equation}
Equation \eqref{eq3} implies that $\psi(g)\psi(h)=\psi(gh)$ for every $g,h\in G_1$, and so $\psi:G_1\to G_2$ is a group isomorphism.

We next show that the isomorphism $\psi$ is $(\eta_1,\eta_2)$-compatible.
The isometry $\rho$ is also an $R$-lattice morphism as well as a ring morphism, so for every $g\in G_1$ and $r\in R$
\begin{equation}\label{equ1}
\rho(u_gr)=\rho(g(r)u_g)=g(r)r_g v_{\psi(g)},
\end{equation}
and on the other hand
\begin{equation}\label{equ2}
\rho(u_gr)=\rho(u_g)\rho(r)=r_g v_{\psi(g)}r=r_g \psi(g)(r)v_{\psi(g)}.
\end{equation}
Equations \eqref{equ1} and \eqref{equ2} yield
\begin{equation}\label{equ3}
g(r)=\psi(g)(r),\ \  \forall g\in G_1, r\in R,
\end{equation}
which says that $\psi$ is $(\eta_1,\eta_2)$-compatible.

Next, from \eqref{eq4} we obtain
\begin{equation}\label{eq5}
f_3(g,h)\psi(f_2)(g,h)=f_1(g,h),
\end{equation}
where $f_3(g,h):=r_g \psi(g)(r_h)r_{gh}^{-1}$. Applying \eqref{equ3} again we have $$f_3(g,h)=r_g g(r_h)r_{gh}^{-1},$$ and hence $f_3:G\times G\to R^*$ is a coboundary (see \eqref{e1}).
By \eqref{eq5}, the cocycles $f_1$ and $\psi(f_2)$ differ by a coboundary $f_3$ and as such they are cohomologous.
\end{proof}
In view of Lemmas \ref{cpiso} and \ref{isocp}, one may get hold of the based $R$-lattice rings which are isometric to a given crossed product $R_{\eta}^f*G$ by restricting only to crossed products
$R_{\eta}^{f'}*G$ of the same group $G$, the same action $\eta$ on $R$ and appropriate cocycles.
To keep the action \eqref{action} in mind, we use the notation $Z^2_{\eta}(G,R^*)$ and $H^2_{\eta}(G,R^*)$.

Let $R$ be a $G$-module via \eqref{action}. Then it is not hard to verify that the set
\begin{equation}\label{auteta}
\text{Aut}_{\eta}(G):=\{\psi\in \text{Aut}(G)|\eta\circ\psi=\eta\}
\end{equation}
of $(\eta,\eta)$-compatible (or just {\it $\eta$-compatible}) automorphisms of $G$ is a subgroup of the group Aut$(G)$, which, in turn, is a subgroup of the symmetric group $\Sigma_{|G|}$.
Note that for a trivial action $\eta$, Aut$_{\eta}(G)=\text{Aut}(G)$.

\begin{Lemma}
With the notation \eqref{defact} and \eqref{auteta},
$$\begin{array}{ccc}
\text{Aut}_{\eta}(G)\times H^2_{\eta}(G,R^*)&\to &H^2_{\eta}(G,R^*)\\
(\psi,[f])&\mapsto& [{\psi}(f)]
\end{array}$$
determines a well-defined (right) action of Aut$_{\eta}(G)$ on $H^2_{\eta}(G,R^*)$.
\end{Lemma}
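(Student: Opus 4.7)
The plan is to verify three items: (i) $\psi(f)$ lies in $Z^2_\eta(G,R^*)$ whenever $f$ does; (ii) cohomologous cocycles pull back to cohomologous cocycles, so the map descends to $H^2_\eta$; and (iii) the resulting assignment satisfies the two right-action axioms. In every step the essential input is the $\eta$-compatibility $\eta\circ\psi=\eta$, which amounts to the identity $g(r)=\psi(g)(r)$ for all $g\in G$ and $r\in R$ (exactly the identity already used in the proof of Lemma \ref{cpiso}).

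For (i), the 2-cocycle identity for $\psi(f)$ with respect to $\eta$ reads $g(\psi(f)(h,k))\cdot\psi(f)(g,hk)=\psi(f)(g,h)\cdot\psi(f)(gh,k)$; unwinding \eqref{defact}, using $\psi(gh)=\psi(g)\psi(h)$, and replacing each $g(-)$ by $\psi(g)(-)$, this becomes exactly the 2-cocycle identity for $f$ evaluated at the triple $(\psi(g),\psi(h),\psi(k))$, which holds by hypothesis. This reproduces the short remark made just below \eqref{defact}. For (ii), a generic 2-coboundary has the shape $f(g,h)=c(g)\cdot g(c(h))\cdot c(gh)^{-1}$ for a $1$-cochain $c\colon G\to R^*$ (as in \eqref{e1}); the same two identities then give $\psi(f)(g,h)=(c\circ\psi)(g)\cdot g((c\circ\psi)(h))\cdot(c\circ\psi)(gh)^{-1}$, which is itself a coboundary, so $f\mapsto\psi(f)$ carries $B^2_\eta$ into itself and therefore descends to $H^2_\eta$.

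For (iii), the identity automorphism clearly acts trivially, and from \eqref{defact} one reads off $(\psi_1\psi_2)(f)(g,h)=f(\psi_1\psi_2(g),\psi_1\psi_2(h))=\psi_1(f)(\psi_2(g),\psi_2(h))=\psi_2(\psi_1(f))(g,h)$, confirming a right action; closure of $\text{Aut}_\eta(G)$ under composition is immediate from \eqref{auteta}. There is no genuine obstacle --- the argument is an unwinding of definitions. The only conceptual point worth stressing is that $\psi$ plays two distinct roles: as a group homomorphism it intertwines the group law in the cocycle and coboundary identities, while as an $\eta$-compatible map it intertwines the $G$-module structures on $R^*$, and both ingredients are needed for well-definedness.
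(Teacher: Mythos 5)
Your proof is correct and follows the same route the paper intends: the paper's proof is the single instruction ``Verify that coboundaries are mapped to coboundaries under automorphisms in $\mathrm{Aut}_{\eta}(G)$,'' and your argument is precisely that verification carried out in full, together with the (equally routine) checks of the cocycle condition and the right-action axioms, all hinging on the identity $g(r)=\psi(g)(r)$ supplied by $\eta$-compatibility. Nothing to correct.
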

\begin{proof}
Verify that coboundaries are mapped to coboundaries under automorphisms in Aut$_{\eta}(G)$.
\end{proof}
With this notation, Lemmas \ref{cpiso} and \ref{isocp} yield
\begin{Corollary}\label{cor}
Two crossed products $R_{\eta}^{f}*G$ and $R_{\eta}^{f'}*G$ are isometric if and only if $[f]$ and $[f']$ belong to the same orbit under the Aut$_{\eta}(G)$-action
on $H^2_{\eta}(G,R^*)$. Consequently, fixing an action \eqref{action}, the Hamming isometry classes of crossed products $R_{\eta}^{f}*G$ are in one-to-one correspondence with the quotient set
$H^2_{\eta}(G,R^*)/$Aut$_{\eta}(G)$.
\end{Corollary}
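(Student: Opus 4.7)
The plan is to obtain Corollary \ref{cor} as a direct specialization of Lemmas \ref{cpiso} and \ref{isocp} to the situation $G_1=G_2=G$ and $\eta_1=\eta_2=\eta$. In this case, an $(\eta,\eta)$-compatible group isomorphism $\psi:G\to G$ is by definition \eqref{auteta} an element of $\text{Aut}_{\eta}(G)$, so the hypotheses and conclusions of the two lemmas translate directly into statements about the action of $\text{Aut}_{\eta}(G)$ on $H^2_{\eta}(G,R^*)$ from the preceding lemma.

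For the ``only if'' direction, I would apply Lemma \ref{isocp} to an isometry $\rho:R_{\eta}^{f}*G\to R_{\eta}^{f'}*G$. The lemma produces a $(\eta,\eta)$-compatible automorphism $\psi\in\text{Aut}_{\eta}(G)$ such that $\psi(f')$ is cohomologous to $f$; that is, $[\psi(f')]=[f]$ in $H^2_{\eta}(G,R^*)$. By the action \eqref{defact} this says exactly that $[f]$ and $[f']$ lie in the same $\text{Aut}_{\eta}(G)$-orbit.

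Conversely, for the ``if'' direction, suppose $[f]$ and $[f']$ lie in a common orbit, so there exists $\psi\in\text{Aut}_{\eta}(G)$ with $[\psi(f')]=[f]$, i.e.\ $\psi(f')$ cohomologous to $f$. Since $\psi$ is $(\eta,\eta)$-compatible, Lemma \ref{cpiso} applied with $f_1=f$ and $f_2=f'$ delivers an isometry $R_{\eta}^{f}*G\cong R_{\eta}^{f'}*G$. The final ``consequently'' clause is then a formal consequence: isometry is an equivalence relation on the set of crossed products $\{R_{\eta}^{f}*G\}_{[f]\in H^2_{\eta}(G,R^*)}$, and the equivalence just established identifies its classes with the $\text{Aut}_{\eta}(G)$-orbits on $H^2_{\eta}(G,R^*)$, so the isometry classes are in bijection with $H^2_{\eta}(G,R^*)/\text{Aut}_{\eta}(G)$.

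There is essentially no obstacle beyond careful bookkeeping; the real content lies in Lemmas \ref{cpiso} and \ref{isocp} and in the well-definedness of the $\text{Aut}_{\eta}(G)$-action on $H^2_{\eta}(G,R^*)$, all of which are already established. The only point requiring minor attention is to match the direction of the cohomological equivalence $\psi(f')\sim f$ with the convention for the orbit action $(\psi,[f])\mapsto[\psi(f)]$, which is immediate once both sides are written out.
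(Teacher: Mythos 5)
Your proposal is correct and follows exactly the paper's own route: the paper derives Corollary \ref{cor} directly from Lemmas \ref{cpiso} and \ref{isocp} specialized to $G_1=G_2=G$ and $\eta_1=\eta_2=\eta$, with the compatible automorphisms identified as $\text{Aut}_{\eta}(G)$ acting on $H^2_{\eta}(G,R^*)$. Nothing is missing.
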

The group of isometries of a crossed product $R_{\eta}^f*G$ is a subgroup of the monomial group
(that is, the isometries only as based $R$-lattices) $\Gamma_{|G|}=(R^*)^{|G|}\rtimes\Sigma_{|G|}$ \eqref{monomial} generated by $(R^*)^{|G|}$ and by a subgroup of $\Sigma_{|G|}$ as follows.
The group of diagonal isometries $(R^*)^{|G|}$ does not change the cohomology class, i.e.
yields crossed products $R_{\eta}^{f'}*G$ such that the cocycles $f'\in Z^2_{\eta}(G,R^*)$ are cohomologous to $f$.
The permutations in $\Sigma_{|G|}$ which take care of the multiplicative property of
the isometries correspond to the compatible automorphisms Aut$_{\eta}(G)$. We have
\begin{Corollary}
The isometry group of a crossed product $R_{\eta}^f*G$ is $(R^*)^{|G|}\rtimes$Aut$_{\eta}(G)$.
\end{Corollary}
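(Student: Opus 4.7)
The plan is to combine Lemmas~\ref{cpiso} and~\ref{isocp} with the description of the ambient monomial group in~\eqref{monomial}. Any $R$-lattice isometry of a based rank-$|G|$ lattice is monomial, so the isometry group of $R_\eta^f*G$ sits as a subgroup of $\Gamma_{|G|}=(R^*)^{|G|}\rtimes\Sigma_{|G|}$, and it remains only to identify which permutation parts in $\Sigma_{|G|}$ and which diagonal parts in $(R^*)^{|G|}$ actually arise from ring isometries.

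For the permutation part, I take an arbitrary ring isometry $\rho\colon R_\eta^f*G\to R_\eta^f*G$ and apply Lemma~\ref{isocp} with $G_1=G_2=G$, $\eta_1=\eta_2=\eta$, $f_1=f_2=f$. This produces an $\eta$-compatible group automorphism $\psi$ together with coefficients $(r_g)\in(R^*)^{|G|}$ satisfying $\rho(u_g)=r_g u_{\psi(g)}$, so the permutation induced on $\{u_g\}_{g\in G}$ lies in $\text{Aut}_\eta(G)\le\Sigma_{|G|}$. Conversely, any $\psi\in\text{Aut}_\eta(G)$ is realized as the permutation part of some ring isometry by Lemma~\ref{cpiso}, since $\psi(f)$ differs from $f$ by the coboundary built from the diagonal coefficients at our disposal.

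For the diagonal part, any tuple $(r_g)\in(R^*)^{|G|}$ reassigns the graded basis by $u_g\mapsto r_g u_g$, thereby changing the cocycle to $f'(g,h)=r_g\,g(r_h)\,f(g,h)\,r_{gh}^{-1}$, which is cohomologous to $f$ by construction. As noted in the paragraph preceding the corollary, this diagonal reassignment yields an isometry of crossed products, so every element of $(R^*)^{|G|}$ contributes. The semidirect-product structure $(R^*)^{|G|}\rtimes\text{Aut}_\eta(G)$ is then inherited directly from~\eqref{monomial}: the $\Sigma_{|G|}$-action on $(R^*)^{|G|}$ by coordinate permutations restricts along $\text{Aut}_\eta(G)\hookrightarrow\Sigma_{|G|}$. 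The main obstacle, such as it is, is the bookkeeping in the converse directions—verifying that an arbitrary pair $(\psi,(r_g))\in\text{Aut}_\eta(G)\times(R^*)^{|G|}$ assembles into a genuine element of the isometry group without redundancy—which falls out of the wreath-product description~\eqref{monomial} once the two constructions above are in place.
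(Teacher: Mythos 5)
Your argument is essentially the paper's own: the authors justify this corollary by the paragraph immediately preceding it, which likewise embeds the isometry group in the monomial group $\Gamma_{|G|}=(R^*)^{|G|}\rtimes\Sigma_{|G|}$ of \eqref{monomial}, identifies the admissible permutation parts with $\mathrm{Aut}_{\eta}(G)$ via Lemmas \ref{cpiso} and \ref{isocp}, and notes that the diagonal subgroup only alters the cocycle by a coboundary. Your closing caveat about whether an arbitrary pair $(\psi,(r_g))$ assembles into a genuine self-isometry is handled at exactly the same level of detail in the paper (namely, it is not checked there either: a diagonal tuple is only shown to give an isometry onto a cohomologous crossed product $R_{\eta}^{f'}*G$, not a ring automorphism of $R_{\eta}^{f}*G$ itself), so your proposal faithfully reproduces the intended argument.
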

\begin{Remark}\label{isomorphic}
If $[f],[f']\in H^2_{\eta}(G,R^*)$ lie in the same orbit under the Aut$_{\eta}(G)$-action as above,
then the corresponding isometric crossed products $R_{\eta}^{f}*G$ and $R_{\eta}^{f'}*G$ are evidently isomorphic as $R$-lattice rings.
The converse, however, does not always hold. That is, an $R$-lattice ring isomorphism between such crossed products does not suffice for $[f],[f']$ to belong to the same Aut$_{\eta}(G)$-orbit.
This is demonstrated in \cite[\S 3.5]{gs}, \cite[Example 3.7]{gordienko} by few examples of isomorphic, but non-isometric twisted group algebras of the same groups.
\end{Remark}

\section{Examples}\label{ex}
This section describes the Hamming isometry classes for two families of crossed products, of cyclic groups over finite fields and of elementary abelian groups
over $\C$.
\subsection{Crossed products of cyclic groups over finite fields}\label{cycodes}
As mentioned above, ideals of crossed products of cyclic groups are the well-investigated skew constacyclic codes \cite{BSU,DNS}.
The isometry classes for the constacyclic case (i.e. for trivial action) were established in \cite{CFLL}.
Earlier, H.Q. Dinh \cite{D12,D13} presented an arithmetic relation between the order of the cyclic group and the cardinality of the base field providing the existence of
constacyclic codes which are not cyclic codes. However,
his dichotomy is finer than isometry, in fact he counts the cohomology classes rather than the Aut$(G)$-orbits of these classes.
The isometry classification of crossed products of cyclic groups is given in Theorem \ref{121cyclic} herein.
The classification of constacyclic code spaces, which is easily derived from this theorem (see Corollary \ref{corol1}), reformulates the result in \cite[Theorem 3.2]{CFLL}.

The ingredients here are a cyclic group $C_n=\langle \sigma\rangle$, a finite field $\F_{q^r}$, where $q$ is any prime number, and an action
\begin{equation}\label{Frobact}
\eta:\begin{array}{rcl}
C_n & \to & \text{Aut}(\F_{q^r})\\
\sigma & \mapsto & \varphi^k,
\end{array}\end{equation}
where $\varphi:x\mapsto x^q, \forall x\in\F_{q^r}$ is the Frobenius automorphism which generates the cyclic group $\text{Aut}(\F_{q^r})\cong C_r$.
So, an isometry classification (see Theorem \ref{121cyclic}) as well as a criterion for (relative) semisimplicity (see Corollary \ref{cyclicss})
should be given in terms of the four integer parameters $n,q,r,k$.
The only constraint on the action \eqref{Frobact} is that the order of $\eta(\sigma)=\varphi^k\in\text{Aut}(\F_{q^r})$ has to divide $n$, which is the order of $\sigma$.
It is not hard to check that $\F_{q^r}$-automorphisms of the same order
(dividing $n$) give rise to compatible actions of $C_n$ on $\F_{q^r}$. Thus, using the notation of \eqref{Frobact} we may assume that
\begin{equation}\label{Delta}
k\in\Delta(r),\text{ and } \frac{r}{k}\in\Delta(n),
\end{equation}
where $\Delta(m)$ denotes the set of positive divisors of $m$.

Before dealing with the isometry problem, it is important to know when $\F_{q^r}*C_n$ is (relative) semisimple, i.e. when it is a direct sum of its irreducible codes (see \S\ref{back})?
By \cite[Theorem 3.3]{AGdR}, a necessary and sufficient condition for the semisimplicity of $\F_{q^r}*C_n$ is that the kernel of the action \eqref{Frobact} is a $q'$-group.
With the assumption \eqref{Delta}, $\ker(\eta)=\langle \sigma^{\frac{r}{k}}\rangle$ is of order $\frac{nk}{r}$. Denoting the greatest common divisor of two integers $m_1,m_2$ by
gcd$(m_1,m_2)$, we have
\begin{corollary}\label{cyclicss}
With the above notation, $\F_{q^r}*C_n$ is semisimple if and only if $\mathrm{gcd}(\frac{nk}{r},q)=1$.
\end{corollary}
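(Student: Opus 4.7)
The plan is to reduce the claim directly to the semisimplicity criterion \cite[Theorem 3.3]{AGdR} that was recalled at the end of \S\ref{back}. Since $\F_{q^r}$ is itself semisimple (being a field), relative semisimplicity of $\F_{q^r}*C_n$ coincides with ordinary semisimplicity, so that criterion applies verbatim: $\F_{q^r}*C_n$ is semisimple if and only if $\ker(\eta)$ is a $q'$-group, meaning $q\nmid|\ker(\eta)|$.

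The heart of the argument is therefore the explicit computation of $\ker(\eta)$ under the normalization \eqref{Delta}. Since $\varphi$ generates the cyclic group $\mathrm{Aut}(\F_{q^r})\cong C_r$ and $k\in\Delta(r)$, the element $\varphi^k=\eta(\sigma)$ has order $r/k$ in $\mathrm{Aut}(\F_{q^r})$. Thus $\sigma^{r/k}$ is the smallest power of $\sigma$ sent to the identity, and because $r/k$ divides $n$ by \eqref{Delta}, the subgroup $\langle\sigma^{r/k}\rangle\subseteq C_n$ has order $n/(r/k)=nk/r$. This identifies $\ker(\eta)=\langle\sigma^{r/k}\rangle$ with $|\ker(\eta)|=nk/r$, as already noted in the paragraph preceding the corollary.

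Combining the two steps, semisimplicity of $\F_{q^r}*C_n$ is equivalent to $q\nmid\frac{nk}{r}$, i.e.\ $\gcd(\frac{nk}{r},q)=1$, which is the stated conclusion.

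There is no real obstacle here; the only subtlety worth flagging is the use of the normalization \eqref{Delta} to guarantee that $r/k$ genuinely divides $n$, which is what makes the order formula $|\langle\sigma^{r/k}\rangle|=nk/r$ valid and allows one to read off the kernel cleanly. Without \eqref{Delta} one would have to replace $r/k$ by $r/\gcd(r,k)$ and $n/(r/k)$ by $n/\gcd(n,r/\gcd(r,k))$, which would obscure the clean arithmetic statement.
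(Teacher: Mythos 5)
Your proposal is correct and follows exactly the paper's own route: invoke \cite[Theorem 3.3]{AGdR} to reduce semisimplicity to $\ker(\eta)$ being a $q'$-group, then compute $\ker(\eta)=\langle\sigma^{r/k}\rangle$ of order $nk/r$ using the normalization \eqref{Delta}. The extra remark on what happens without \eqref{Delta} is a harmless addition.
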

\begin{remark}\label{classic}
A stronger condition than the one in Corollary \ref{cyclicss}, namely $nk=r$, says that the action $\eta$ is faithful ($\ker(\eta)$ is trivial). This is exactly the case where
$\F_{q^r}*C_n$ is a classical crossed product. This central simple ambient space admits only trivial codes.
\end{remark}
Back to the Hamming isometry:
owing to Corollary \ref{cor}, we are after the Aut$_{\eta}(C_n)$-orbits in the cohomology of $C_n$ with coefficients in $\F_{q^r}^*$.
Readers who are not interested in the following details are referred directly to equations \eqref{H2eta} and \eqref{autetacn} for the structures of
$H^2_{\eta}(C_n,\F_{q^r}^*)$ and $\mathrm{Aut}_{\eta}(C_n)$ respectively,
and to \eqref{betapsif} for a description of the Aut$_{\eta}(C_n)$-action on $H^2_{\eta}(C_n,\F_{q^r}^*)$. These are exploited for the classification
criterion in Theorem \ref{121cyclic}.

We start with a brief reminder of the structure of $H^2_{\eta}(C_n,\F^*)$ where $\eta$ is any action of $C_n=\langle \sigma\rangle$ on an arbitrary field $\F$.
for more details see, e.g., \cite[Page 58, Example 2]{b}.
Let $f\in Z^2_{\eta}(C_n,\F^*)$, and let $\{u_{\sigma^i}\}_{i=0}^{n-1}$ be a $C_n$-graded basis of the corresponding crossed product $\F_{\eta}^f*C_n$.
Then by a straightforward induction argument we infer that $u_{\sigma}^k=\prod_{i=1}^{k-1}f(\sigma^i,\sigma)u_{\sigma^k}$ for every
positive integer $k$. In particular, $u_{\sigma}^n=\beta_f$, where
\begin{equation}\label{beta}
\beta_f:=\prod_{i=0}^{n-1}f(\sigma^i,\sigma)\in\F^*.
\end{equation}
Since $\beta_f$ is a power of $u_{\sigma}$, it clearly belongs to the $C_n$-invariant elements of $\F^*$, which is denoted by $(\F^*)^{C_n}$.

Next, substituting the basis $\{u_{\sigma^i}\}_{i=0}^{n-1}$ by a new $C_n$-graded basis $\{v_{\sigma^i}\}_{i=0}^{n-1}$ of $\F_{\eta}^f*C_n$
defined by $v_{\sigma^i}:=u_{\sigma}^i,\ \  0\leq i\leq n-1$, we deduce that $f$ is cohomologous to a cocycle

\begin{eqnarray}
\label{standard}(\sigma^l,\sigma^j)\mapsto\left\{\begin{array}{cc}
1,&\text{if } l+j<n\\
\beta_f,& \text{if } l+j\geq n
\end{array}\right., 0\leq l,j\leq n-1.\end{eqnarray}
Consequently, there is an isomorphism
$$\begin{array}{rcl}
\F_{\eta}^f*C_n & \xrightarrow{\cong} & \bigslant{\F[y; \eta]}{\langle y^n-\beta_f \rangle}\\
u_{\sigma}&\mapsto & y+\langle y^n-\beta_f \rangle,
\end{array}$$
where $\F[y; \eta]$ is the skew polynomial ring \cite[\S 1.2]{MR}, whose indeterminate $y$ acts
on the coefficient field $\F$ via the automorphism $\eta(\sigma)$.

Unequal 2-cocycles of the form \eqref{standard} may still be cohomologous.
To complete the discussion, let
\begin{eqnarray}\label{norm}\mathcal{N}_{C_n}:\begin{array}{ccl}
\F^*&\to&\F^*\\
x&\mapsto &\prod_{i=0}^{n-1}\sigma^i(x).
\end{array}\end{eqnarray}
be the {\it norm} map. Then $[f_1]=[f_2]\in H^2_{\eta}(C_n,\F^*)$ if and only if $\beta_{f_1}^{-1}\cdot\beta_{f_2}$ lies in the image of the norm map.
It is easily verified that
$\mathcal{N}_{C_n}(\F^*)< (\F^*)^{C_n},$
and so a cohomology class is identified with an invariant scalar modulo the norm subgroup, that is
\begin{equation}\label{cohomCnF}
H^2_{\eta}(C_n,\F^*)\cong \bigslant{(\F^*)^{C_n}}{\mathcal{N}_{C_n}(\F^*)}.
\end{equation}

We now concentrate on the action \eqref{Frobact} of $C_n$ on the finite field $\F=\F_{q^r}$.
The elements of $\F_{q^r}$ which are fixed under the image $\langle\varphi^k\rangle<\text{Aut}(\F_{q^r})$ of $\eta$ are exactly the elements of the subfield $\F_{q^k}\subset \F_{q^r}$.
Let $x_0$ be any generator of the cyclic group $\F_{q^r}^*$ of order $q^r-1$.
Then \begin{equation}\label{Cninv}
(\F_{q^r}^*)^{C_n}=\F_{q^k}^*=\langle x_0^{\frac{q^r-1}{q^k-1}}\rangle.
\end{equation}
The subgroup $\mathcal{N}_{C_n}(\F_{q^r}^*)< \F_{q^k}$ is generated by $\mathcal{N}_{C_n}(x_0)$.
By the definition of the norm \eqref{norm}, bearing in mind that the subgroup $\langle\sigma^{\frac{r}{k}}\rangle<C_n$ acts trivially on $\F_{q^r}$, we have
\begin{eqnarray}\label{normsbgrp}\begin{array}{l}
\mathcal{N}_{C_n}(x_0)=\prod_{i=0}^{n-1}\sigma^i(x_0)=(\prod_{i=0}^{\frac{r}{k}-1}\sigma^i(x_0))^{\frac{nk}{r}}=(\prod_{i=0}^{\frac{r}{k}-1}\varphi^{ik}(x_0))^{\frac{nk}{r}}\\=
(\prod_{i=0}^{\frac{r}{k}-1}x_0^{q^{ik}})^{\frac{nk}{r}}=(x_0^{\sum_{i=0}^{\frac{r}{k}-1}{q^{ik}}})^{\frac{nk}{r}}=(x_0^{\frac{q^r-1}{q^k-1}})^{\frac{nk}{r}}.
\end{array}\end{eqnarray}
Let $\tilde{x}:=x_0^{\frac{q^r-1}{q^k-1}}$ be a generator of the $C_n$-invariant subgroup of $\F_{q^r}^*$ \eqref{Cninv} of order $q^k-1$.
Gathering equations \eqref{cohomCnF},\eqref{Cninv} and \eqref{normsbgrp} we get
\begin{equation}\label{H2eta}
H^2_{\eta}(C_n,\F_{q^r}^*)\cong \bigslant{\langle\tilde{x}\rangle}{\langle\tilde{x}^{\frac{nk}{r}}\rangle}\cong C_{\text{gcd}(q^k-1,\frac{nk}{r})}.
\end{equation}
Next, the automorphisms of $C_n$ are given by
$$\psi_j:\begin{array}{rcl}C_n&\to& C_n\\
\sigma^i&\mapsto &\sigma^{ij}\end{array},$$
for any $j$ prime to $n$ (that can be chosen between 1 and $n-1$). Which of them is $\eta$-compatible? By the definition in \eqref{auteta},
\begin{eqnarray}\label{autetacn}
\begin{array}{c}\mathrm{Aut}_{\eta}(C_n)=\{\psi_j\in\mathrm{Aut}(C_n) |\ \ \eta=\eta\circ\psi_j\}=\\
=\{\psi_j\in\mathrm{Aut}(C_n) |\ \ \eta(\sigma)=\eta\circ\psi_j(\sigma)\}=\\
\{\psi_j\in\mathrm{Aut}(C_n) |\ \ \varphi^k=\eta(\sigma^j)=\varphi^{kj}\}=\{\psi_j\in\mathrm{Aut}(C_n) |\ \ j\equiv 1 (\text{mod }\frac{r}{k})\}.
\end{array}\end{eqnarray}
Under the identification of $\mathrm{Aut}(C_n)$ with the multiplicative group
$$U_n=\{a(\text{mod }n)\in\Z/n\Z|\ \  \mathrm{gcd}(a,n)=1\},$$
the subgroup $\mathrm{Aut}_{\eta}(C_n)<\mathrm{Aut}(C_n)$ is identified with the kernel of the projection
$$\begin{array}{rcl}
U_n&\to&U_{\frac{r}{k}}\\
a(\text{mod }n)&\mapsto& a (\text{mod }\frac{r}{k}).
\end{array}$$
Its order is therefore $|\mathrm{Aut}_{\eta}(C_n)|=\frac{\phi(n)}{\phi(\frac{r}{k})},$ where $\phi$ denotes Euler's totient function.

Let us now find out what an automorphism $\psi_j$ does to a cohomology class $[f]\in H^2_{\eta}(C_n, \F_{q^r}^*)$.
As explained above, in order to understand the cohomology class
$\psi_j([f])\in H^2(C_n, \F_{q^r}^*)$, it is sufficient to compute $\beta_{\psi_j(f)}$, for a cocycle $f\in Z^2(C_n, \F_{q^r}^*)$ of the form \eqref{standard}.
By the definitions \eqref{defact} and \eqref{beta}
$$\beta_{\psi_j(f)}=\prod_{i=0}^{n-1}\psi_j(f)(\sigma^i,\sigma)=\prod_{i=0}^{n-1}f(\psi_j(\sigma^i),\psi_j(\sigma))=\prod_{i=0}^{n-1}f(\sigma^{ij},\sigma^j).$$
Since $j$ and $n$ are coprime, $\{\sigma^{ij}\}_{i=0}^{n-1}$ runs over all elements of $C_n$ exactly once. Thus
$$\beta_{\psi_j(f)}=\prod_{l=0}^{n-1}f(\sigma^{l},\sigma^j)=\prod_{l=0}^{n-j-1}f(\sigma^{l},\sigma^j)\cdot\prod_{l=n-j}^{n-1}f(\sigma^{l},\sigma^j).$$
Since $f$ is of the form \eqref{standard} we obtain
\begin{equation}\label{betapsif}
\beta_{\psi_j(f)}=1\cdot\prod_{l=n-j}^{n-1}\beta_f=(\beta_f)^j.
\end{equation}
We deduce that the automorphism $\psi_j$ raises cohomology classes (written in a multiplicative way) to the power $j$.

To summarize the discussion we define an equivalence relation $\sim_{\eta}$ on the set $A_{\eta}:=\{1,\cdots,m\}$, where $m:=\text{gcd}(q^k-1,\frac{nk}{r})$.
Two elements $a,b\in A_{\eta}$ are $\sim_{\eta}$-equivalent if $aj\equiv b(\text{mod }m)$ for some integer $j$, which is prime to $n$ and satisfies $j\equiv 1(\text{mod }\frac{r}{k}).$
Applying Corollary \ref{cor}, equations \eqref{H2eta}, \eqref{autetacn} and \eqref{betapsif} together with the above equivalence relation $\sim_{\eta}$ we have
\begin{theorem}\label{121cyclic}
Let \eqref{Frobact} be an action of a cyclic group $C_n$ on a finite field $\F_{q^r}$.
Then there is a one-to-one correspondence between the Hamming isometry classes of the crossed products
$(\F_{q^r})_{\eta}^f*C_n$, and the quotient set $A_{\eta}/\sim_{\eta}$ as above.
\end{theorem}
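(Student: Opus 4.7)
The plan is to combine Corollary \ref{cor} with the structural computations already assembled in the passage preceding the theorem. By Corollary \ref{cor}, fixing the action $\eta$ in \eqref{Frobact}, the Hamming isometry classes of crossed products $(\F_{q^r})_\eta^f * C_n$ are in bijection with the orbit set $H^2_\eta(C_n, \F_{q^r}^*)/\mathrm{Aut}_\eta(C_n)$. The whole argument thus reduces to rewriting this orbit set as $A_\eta/\sim_\eta$.

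First, I would invoke the isomorphism \eqref{H2eta}, which identifies $H^2_\eta(C_n,\F_{q^r}^*)$ with the cyclic group $C_m$ of order $m=\mathrm{gcd}(q^k-1,\tfrac{nk}{r})$; concretely, a cohomology class is encoded by the exponent $a\in\{1,\dots,m\}$ of a fixed generator $\tilde x\in (\F_{q^r}^*)^{C_n}$, giving the underlying set $A_\eta$. Next, I would invoke \eqref{autetacn}, which identifies $\mathrm{Aut}_\eta(C_n)$ with the subgroup $\{\,\psi_j : \gcd(j,n)=1,\ j\equiv 1\pmod{\tfrac{r}{k}}\,\}$ of $\mathrm{Aut}(C_n)$.

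The key computational input is \eqref{betapsif}: each $\psi_j\in\mathrm{Aut}_\eta(C_n)$ acts on $H^2_\eta(C_n,\F_{q^r}^*)\cong C_m$ by raising cohomology classes to the $j$-th power, equivalently, by sending the exponent $a\in A_\eta$ to the reduction of $aj$ modulo $m$. Hence two classes $a,b\in A_\eta$ lie in the same $\mathrm{Aut}_\eta(C_n)$-orbit precisely when there exists an integer $j$ with $\gcd(j,n)=1$ and $j\equiv 1\pmod{\tfrac{r}{k}}$ such that $aj\equiv b\pmod m$. This is exactly the definition of the relation $\sim_\eta$ on $A_\eta$, so $A_\eta/\!\sim_\eta$ matches the orbit set, and Corollary \ref{cor} finishes the proof.

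The only point that might deserve more than a sentence of care is the compatibility of the three moduli $n$, $r/k$, and $m$ appearing in $\sim_\eta$: the multiplicative exponent $j$ ranges over integers coprime to $n$ and congruent to $1$ modulo $r/k$ (so that $\psi_j$ really lies in $\mathrm{Aut}_\eta(C_n)$), while its effect on a class is recorded modulo $m$. Since $m$ divides $q^k-1$, which has no arithmetic relation with $n$ forced a priori, one must verify that every residue $j\pmod n$ in $\mathrm{Aut}_\eta(C_n)$ does yield a well-defined action on $C_m$ via \eqref{betapsif}, and conversely that any integer $j$ satisfying the stated congruence conditions can be realized by some element of $\mathrm{Aut}_\eta(C_n)$; both assertions are immediate from \eqref{betapsif} and \eqref{autetacn}. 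I expect no genuine obstacle beyond bookkeeping.
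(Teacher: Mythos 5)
Your proposal is correct and follows exactly the paper's own argument: the paper likewise derives the theorem by combining Corollary \ref{cor} with equations \eqref{H2eta}, \eqref{autetacn} and \eqref{betapsif}, reading off that $\psi_j$ multiplies the exponent of a class by $j$ modulo $m$ and that the admissible $j$ are those coprime to $n$ with $j\equiv 1\pmod{\frac{r}{k}}$, which is precisely the relation $\sim_\eta$. Your closing paragraph on the bookkeeping of the moduli is a reasonable (and harmless) extra check that the paper leaves implicit.
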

There are two ``extremal" cases in Theorem \ref{121cyclic}. The first one is when the action \eqref{Frobact} is trivial, that is when $k=r$. In this case, $a\sim_{\eta}b$ if and only if
they differ by $j$ which is prime to $m=\text{gcd}(q^r-1,n)$, and so the equivalence classes can be represented by the set of divisors of $m$. We have
\begin{corollary}\label{corol1}(see \cite[Theorem 3.2]{CFLL})
The twisted hamming isometry classes $(\F_{q^r})^f*C_n$ are in one-to-one correspondence with the set of divisors $\Delta(\text{gcd}(q^r-1,n))$.
\end{corollary}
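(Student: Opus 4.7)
The plan is to derive this corollary as a direct specialization of Theorem \ref{121cyclic} to the trivial action case $k=r$. First I would substitute $k=r$ into the statement: the quotient $r/k$ becomes $1$, so the congruence $j \equiv 1 \pmod{r/k}$ is vacuous, and hence $\mathrm{Aut}_\eta(C_n)$ collapses to all of $\mathrm{Aut}(C_n)$, identified with $U_n$. The modulus $m = \gcd(q^k-1, nk/r)$ becomes $m = \gcd(q^r-1, n)$, and the equivalence relation $\sim_\eta$ on $A_\eta = \{1,\ldots,m\}$ reads: $a \sim_\eta b$ iff $aj \equiv b \pmod m$ for some $j \in U_n$.

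The central step is to identify this equivalence as the orbit relation of the multiplicative action of $U_m$ on $\Z/m\Z$. Since $m \mid n$, reduction modulo $m$ yields a group homomorphism $U_n \to U_m$, and I would verify that this map is surjective: given any $c \in U_m$, a preimage coprime to $n$ can be produced by the Chinese Remainder Theorem, choosing residues modulo each prime power dividing $n/m$ so as to avoid the relevant primes. Consequently the $U_n$-orbits on $\Z/m\Z$ (via multiplication mod $m$) coincide with the $U_m$-orbits.

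Finally, I would invoke the standard fact that orbits of $U_m$ on $\Z/m\Z$ under multiplication are parameterized by the invariant $\gcd(\cdot, m)$: two elements share an orbit iff they have the same greatest common divisor with $m$, and every divisor $d$ of $m$ is realized (by the element $d$ itself, noting that the element $m \equiv 0$ corresponds to $d=m$). This produces a bijection $A_\eta/\sim_\eta \leftrightarrow \Delta(m) = \Delta(\gcd(q^r-1, n))$, which combined with Theorem \ref{121cyclic} completes the proof. The only point that requires real care is the elementary lifting argument establishing surjectivity of $U_n \twoheadrightarrow U_m$; the rest is unpacking of definitions.
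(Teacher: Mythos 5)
Your proposal is correct and follows essentially the same route as the paper, which obtains the corollary by specializing Theorem \ref{121cyclic} to $k=r$ and observing that $a\sim_{\eta}b$ exactly when $a$ and $b$ differ multiplicatively by a unit modulo $m=\gcd(q^r-1,n)$, so the classes are indexed by $\Delta(m)$. The only difference is that you explicitly justify the surjectivity of $U_n\to U_m$ (the passage from ``$j$ prime to $n$'' to ``$j$ prime to $m$''), a step the paper leaves implicit.
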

The other case occurs when the action \eqref{Frobact} is far from being trivial, here in the sense that $m$ divides $\frac{r}{k}$. When this condition holds, $j\equiv 1(\text{mod }\frac{r}{k})$
implies that  $j\equiv 1(\text{mod }m)$, and so $a\sim_{\eta}b$ if and only if $a\equiv b(\text{mod }m).$
\begin{corollary}\label{corol2}
With the above notation, suppose that $m\in\Delta(\frac{r}{k})$. Then there is a one-to-one correspondence between the Hamming isometry classes of the crossed products
$(\F_{q^r})_{\eta}^f*C_n$, and the set $\{1,\cdots,m\}$.
\end{corollary}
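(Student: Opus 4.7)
The plan is to derive Corollary \ref{corol2} directly from Theorem \ref{121cyclic} by analysing what the equivalence relation $\sim_{\eta}$ collapses to under the extra hypothesis $m\in\Delta(\tfrac{r}{k})$.

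First I would recall the setup from Theorem \ref{121cyclic}: the Hamming isometry classes of $(\F_{q^r})_{\eta}^f*C_n$ biject with $A_{\eta}/\!\sim_{\eta}$, where $A_{\eta}=\{1,\dots,m\}$ with $m=\gcd(q^k-1,\tfrac{nk}{r})$, and $a\sim_{\eta}b$ iff there exists an integer $j$ coprime to $n$ with $j\equiv 1\pmod{\tfrac{r}{k}}$ and $aj\equiv b\pmod{m}$. So the entire task reduces to showing that under the assumption $m\mid \tfrac{r}{k}$, the relation $\sim_{\eta}$ restricted to $\{1,\dots,m\}$ is trivial, i.e., each element is its own equivalence class.

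The key step is elementary divisibility: if $m\mid\tfrac{r}{k}$, then any integer $j$ satisfying $j\equiv 1\pmod{\tfrac{r}{k}}$ automatically satisfies $j\equiv 1\pmod{m}$. Hence for such $j$ the congruence $aj\equiv b\pmod{m}$ simplifies to $a\equiv b\pmod{m}$. Since $A_{\eta}=\{1,\dots,m\}$ is a complete (and minimal) system of residues modulo $m$, this forces $a=b$. Conversely, $j=1$ is always an admissible multiplier (it is coprime to $n$ and congruent to $1$ modulo $\tfrac{r}{k}$), so each singleton $\{a\}$ is indeed a full equivalence class.

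Putting this together yields $|A_{\eta}/\!\sim_{\eta}|=m$ with the identity map $A_{\eta}\to\{1,\dots,m\}$ serving as the claimed bijection. There is really no obstacle here beyond recognising the divisibility implication; the entire content of the corollary is that when $m\mid\tfrac{r}{k}$ the group $\mathrm{Aut}_{\eta}(C_n)$ acts trivially on $H^2_{\eta}(C_n,\F_{q^r}^*)$ (via \eqref{betapsif}), so the cohomology group itself classifies isometry classes without any further identification.
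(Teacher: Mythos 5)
Your proposal is correct and follows exactly the paper's own argument: the divisibility $m\mid\tfrac{r}{k}$ forces every admissible $j\equiv 1\pmod{\tfrac{r}{k}}$ to satisfy $j\equiv 1\pmod{m}$, so $\sim_{\eta}$ collapses to equality on the complete residue system $\{1,\dots,m\}$, giving $m$ isometry classes. Your closing remark that this amounts to $\mathrm{Aut}_{\eta}(C_n)$ acting trivially on $H^2_{\eta}(C_n,\F_{q^r}^*)$ is a correct and slightly more conceptual restatement of the same fact.
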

When $m$ is fixed, the equivalence relation $\sim_{\eta}$ on $A_{\eta}=\{1,\cdots,m\}$ clearly coarsens (perhaps improperly) the relation under the assumption of Corollary \ref{corol2}.
It is not hard to verify that it always refines (again, perhaps improperly) the relation
for twisted classes in Corollary \ref{corol1}.

\subsection{Twisted group algebras of elementary abelian groups over the complex numbers}
For a prime $p$ and a positive integer $s$, let
$$(C_p)^s=\langle \sigma_1\rangle\times\cdots\times\langle \sigma_s\rangle$$
be the direct product of $s$ copies of the cyclic group of order $p$.
The base ring here is the complex field $\C$ endowed with a trivial $(C_p)^s$-action.
In Theorem \ref{complete} below we classify the complex twisted group algebras $\C^f*(C_p)^s$ up to Hamming isometry.
These are the ambient spaces of complex {\it twisted elementary abelian codes}.

A 2-cocycle $f\in Z^2((C_p)^s,\C^*)$ gives rise to a 2-place function (see, e.g. \cite[\S 2.2]{EK13})
\begin{eqnarray}\label{ssf}\alpha_f:\begin{array}{ccl}
(C_p)^s\times(C_p)^s&\to& \C^*\\
(g_1,g_2)&\mapsto& f(g_1,g_2)f(g_2,g_1)^{-1}.
\end{array}\end{eqnarray}
Then $\alpha_f$ satisfies the following properties:
\begin{enumerate}
\item It is a morphism when fixing each one of the components. In particular, its values lie in the subgroup
$$H_p:=\left\{\exp\left(\frac{2\pi i\cdot j}{p}\right)\right\}_{j=0}^{p-1}<\C^*$$ of $p$-th roots of 1.
\item It is skew-symmetric in the sense that
\begin{equation}\label{skewsymm}\alpha_f(g,g)=1, \ \ \forall g\in (C_p)^s. \end{equation}
Note that equation \eqref{skewsymm} implies that $\alpha_f(g_1,g_2)=\alpha_f(g_2,g_1)^{-1}$ for every $g_1,g_2\in (C_p)^s$.
\end{enumerate}
The function $\alpha_f$ is an {\it alternating bicharacter}.

As already observed by I. Schur \cite{Schur04, Schur07}, two elements $f_1,f_2\in Z^2((C_p)^s,\C^*)$ are cohomologous if and only if they yield the same bicharacters.
We therefore identify the cohomology $H^2((C_p)^s,\C^*)$ with the complex alternating bicharacters on $(C_p)^s$.

How does Aut$((C_p)^s)$ act on alternating bicharacters?
Note that both groups $(C_p)^s$ and $H_p$ can be viewed as vector spaces over the field $\F_p$.
With this perspective, an alternating bicharacter is a skew-symmetric $\F_p$-bilinear form.
Furthermore, the group of automorphisms of $(C_p)^s$ is the general linear group GL$_s(\F_p)$. It acts on the vector space $(C_p)^s$ as a change of basis.
It is now transparent that the action of Aut$((C_p)^s)$ on $H^2((C_p)^s,\C^*)$, i.e. the alternating bicharacters, is by congruence of forms.

Let $\{\sigma_1,\cdots,\sigma_s\}$ be any set of generators of $(C_p)^s$ and let $\zeta_p:=\exp(\frac{2\pi i}{p})\in\C^*$. For every $i=0,\cdots,\lfloor\frac{s}{2}\rfloor$,
define a bicharacter $\alpha_i$ by evaluating it on the pairs $(\sigma_j,\sigma_l)$ for $j<l$ as follows, and extending it in a skew-symmetric bilinear way.
\begin{eqnarray}\label{alpha}
\alpha_i(\sigma_j,\sigma_l)=\left\{\begin{array}{ll}
\zeta_p& \text{if  } 1\leq j\leq i, \text{and } l=j+i,\\
1 &\text{otherwise.}
\end{array}\right.\end{eqnarray}
It is easily checked that the radical Rad$(\alpha_i)=\{g\in G|\ \ \alpha_i(g,h)=1,\forall h\in G\}$ is generated by $s-2i$ elements
\begin{equation}\label{radalpha}
\text{Rad}(\alpha_i)=\langle \sigma_j\rangle_{j=2i+1}^s.
\end{equation}
\begin{theorem}\label{complete}
With the above notation, $\{\alpha_i\}_{i=0}^{\lfloor\frac{s}{2}\rfloor}$ is a complete set of representatives for the Hamming isometry classes of complex twisted $(C_p)^s$-algebras.
\end{theorem}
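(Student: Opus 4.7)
The plan is to exploit Corollary \ref{cor} to translate the classification into the standard congruence classification of alternating $\F_p$-bilinear forms. Since the action $\eta$ of $(C_p)^s$ on $\C$ is trivial, $\mathrm{Aut}_\eta((C_p)^s) = \mathrm{Aut}((C_p)^s)$; viewing $(C_p)^s$ as the vector space $\F_p^s$, this automorphism group is $\mathrm{GL}_s(\F_p)$. By Corollary \ref{cor}, the Hamming isometry classes of $\C^f*(C_p)^s$ are in bijection with the $\mathrm{GL}_s(\F_p)$-orbits on $H^2((C_p)^s,\C^*)$.

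Next, I would combine Schur's identification $[f] \leftrightarrow \alpha_f$ with the isomorphism $H_p \cong \F_p$ sending $\zeta_p^a \leftrightarrow a$ to obtain a bijection between $H^2((C_p)^s,\C^*)$ and the space of alternating $\F_p$-bilinear forms on $\F_p^s$. The first real step is to verify that this bijection is equivariant for the $\mathrm{GL}_s(\F_p)$-action \eqref{defact} on cohomology and the natural congruence action $(P,M) \mapsto P^{\mathrm{T}} M P$ on forms. This amounts to unwinding definitions: for $\psi \in \mathrm{GL}_s(\F_p)$ and $g,h \in (C_p)^s$,
\[
\alpha_{\psi(f)}(g,h) = \psi(f)(g,h)\,\psi(f)(h,g)^{-1} = f(\psi(g),\psi(h))\,f(\psi(h),\psi(g))^{-1} = \alpha_f(\psi(g),\psi(h)),
\]
so the matrix of $\alpha_{\psi(f)}$ in the basis $\sigma_1,\dots,\sigma_s$ is the congruence transform of that of $\alpha_f$ by the matrix representing $\psi$.

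With this equivariance in hand, the problem reduces to the classical normal form theorem for alternating bilinear forms over a field: every alternating form on $\F_p^s$ has even rank $2i$ with $0 \leq i \leq \lfloor s/2 \rfloor$, two such forms are $\mathrm{GL}_s(\F_p)$-congruent if and only if they share the same rank, and a representative of rank $2i$ is the block-diagonal form $\bigoplus_{j=1}^{i} \bigl(\begin{smallmatrix} 0 & 1 \\ -1 & 0 \end{smallmatrix}\bigr) \oplus 0_{s-2i}$. Reading off the matrix of $\alpha_i$ from its defining values \eqref{alpha} yields the block matrix $\bigl(\begin{smallmatrix} 0 & I_i & 0 \\ -I_i & 0 & 0 \\ 0 & 0 & 0 \end{smallmatrix}\bigr)$, which a basis permutation transforms into the standard block form of rank $2i$; this is also consistent with \eqref{radalpha}, which records corank $2i$ for the radical. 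Hence $\{\alpha_i\}_{i=0}^{\lfloor s/2\rfloor}$ is a complete and non-redundant list of orbit representatives.

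The main obstacle is conceptual rather than technical: once the $\mathrm{GL}_s(\F_p)$-action on $H^2$ is recognized as matrix congruence via the equivariance check above, the theorem follows immediately from the symplectic normal form, which holds uniformly in the characteristic of $\F_p$ (including $p=2$) thanks to the alternating property \eqref{skewsymm} ruling out the troublesome non-alternating skew-symmetric forms.
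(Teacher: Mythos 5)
Your proposal is correct and follows essentially the same route as the paper: the paper's proof simply invokes the identification of isometry classes with congruence classes of alternating forms over $\F_p$ and cites the standard normal-form theorem (Albert), which is exactly the reduction you carry out, with the added (and welcome) explicit check that the $\mathrm{Aut}((C_p)^s)$-action on bicharacters corresponds to matrix congruence.
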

\begin{proof}
The theorem follows from a standard result about skew-symmetric forms on vector spaces (see, e.g. \cite[Theorem 4]{Albert38}),
using the above identification of isometry classes of twisted $(C_p)^s$-algebras with congruence classes of skew-symmetric forms of $s$-dimensional spaces over $\F_p$.
\end{proof}
As for semisimplicity, again by \cite[Theorem 3.3]{AGdR}, for every prime $p$, positive integer $s$, and cocycle $f\in Z^2((C_p)^s,\C^*)$, the twisted group algebra $\C^f*(C_p)^s$ is semisimple.
Its Artin-Wedderburn decomposition as a direct sum of complex matrix algebras (of the same dimension) is given in terms of its corresponding bicharacter $\alpha_f$ \eqref{ssf}
with a representative $\alpha_i$ \eqref{alpha}.
\begin{theorem}\label{ir}
Let $f\in Z^2((C_p)^s,\C^*)$, such that its corresponding bicharacter is congruent to $\alpha_i$ for some $0\leq i\leq\lfloor\frac{s}{2}\rfloor$. Then the twisted group algebra
$\C^f*(C_p)^s$ is isomorphic to a direct sum of $p^{s-2i}$ copies of $p^i\times p^i$ complex matrix algebras, namely,
$$\C^f*(C_p)^s\cong \left(M_{p^i}(\C)\right)^{p^{s-2i}}.$$
In particular, two twisted group algebras $\C^{f_1}*(C_p)^s$ and $\C^{f_2}*(C_p)^s$ are isometric if and only if they are isomorphic (compare with Remark \ref{isomorphic}).
\end{theorem}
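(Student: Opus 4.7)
My first move is to reduce to a canonical cocycle. By Theorem~\ref{complete}, any $f$ whose bicharacter is congruent to $\alpha_i$ lies in the same $\mathrm{Aut}((C_p)^s)$-orbit on $H^2((C_p)^s,\C^*)$ as a fixed representative of $[\alpha_i]$, and by Remark~\ref{isomorphic} this already gives an isomorphism of $\C$-algebras; so it suffices to compute $\C^{\tilde f}*(C_p)^s$ for a convenient cocycle $\tilde f$ with antisymmetrization $\alpha_i$. Fix generators $\sigma_1,\ldots,\sigma_s$ realizing the normal form \eqref{alpha} and take the normalized ``Heisenberg'' cocycle
$$
\tilde f\bigl(\sigma_1^{a_1}\cdots\sigma_s^{a_s},\ \sigma_1^{b_1}\cdots\sigma_s^{b_s}\bigr)\;=\;\prod_{j=1}^{i}\zeta_p^{\,a_{j+i}b_j},
$$
which a direct check confirms is a 2-cocycle with antisymmetrization $\alpha_i$.

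The key structural observation is that $\tilde f$ depends only on coordinate pairs $(j,j+i)$ with $1\le j\le i$, and these pairs are disjoint; moreover it is identically $1$ as soon as one argument lies in $\langle\sigma_{2i+1},\ldots,\sigma_s\rangle=\mathrm{Rad}(\alpha_i)$ (see \eqref{radalpha}). Consequently the graded basis $\{u_g\}$ assembles into a ring isomorphism
$$
\C^{\tilde f}*(C_p)^s\;\cong\;\bigotimes_{j=1}^{i}A_j\;\otimes_{\C}\;\C[(C_p)^{s-2i}],
$$
where $A_j$ is the twisted group algebra of the pair $\langle\sigma_j,\sigma_{j+i}\rangle\cong(C_p)^2$ whose generators $u,v$ satisfy $u^p=v^p=1$ and $uv=\zeta_p vu$. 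Each $A_j$ is the classical clock-shift (Weyl) algebra: via the standard $p$-dimensional representation by the cyclic permutation and diagonal $\zeta_p$-powers matrices, $A_j$ surjects onto $M_p(\C)$, and a dimension count ($\dim A_j=p^2=\dim M_p(\C)$) forces $A_j\cong M_p(\C)$. The untwisted factor decomposes by Fourier analysis over the abelian group as $\C[(C_p)^{s-2i}]\cong\C^{p^{s-2i}}$.

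Assembling the factors,
$$
\C^f*(C_p)^s\;\cong\;M_p(\C)^{\otimes i}\otimes_{\C}\C^{p^{s-2i}}\;\cong\;M_{p^i}(\C)\otimes_{\C}\C^{p^{s-2i}}\;\cong\;\bigl(M_{p^i}(\C)\bigr)^{p^{s-2i}},
$$
which is the claimed Artin--Wedderburn decomposition. For the ``in particular'' clause, one direction is Remark~\ref{isomorphic}; conversely, the invariant $i$ can be read off from the Artin--Wedderburn datum (as either the block size $p^i$ or the number $p^{s-2i}$ of simple summands), so two isomorphic twisted $(C_p)^s$-algebras have bicharacters congruent to the same $\alpha_i$ and are therefore isometric by Theorem~\ref{complete} and Corollary~\ref{cor}.

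The step I expect to require the most care is verifying that the tensor decomposition is genuinely a ring (not merely linear) isomorphism: this amounts to confirming, for the specific $\tilde f$ chosen, that mixed-block values and radical-block values all vanish, so that the multiplications in the separate tensor factors decouple. The rest is bookkeeping, and the identification of the clock-shift algebra with $M_p(\C)$ is classical.
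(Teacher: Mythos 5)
Your argument is correct, but it follows a genuinely different route from the paper's. The paper's proof is a short dimension count on top of a cited structural theorem: by Yamazaki \cite{Y} (see also \cite[Corollary 8.2.10]{karp3}), $\C^f*(C_p)^s\cong (M_n(\C))^l$ with all simple blocks of equal size, and $l$ is then identified with $\dim Z(\C^f*(C_p)^s)=|\mathrm{Rad}(\alpha_f)|=|\mathrm{Rad}(\alpha_i)|=p^{s-2i}$ via \eqref{radalpha}, whence $n=p^i$ from $n^2l=p^s$. You instead normalize the cocycle to an explicit bilinear representative and build the decomposition constructively, as a tensor product of $i$ clock-and-shift copies of $M_p(\C)$ with the group algebra of the radical. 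Your approach is self-contained: it needs neither Yamazaki's theorem nor the a priori knowledge that all blocks have equal size, and it essentially exhibits the matrix units; the price is the verification (which you rightly flag, and which does hold, since your $\tilde f$ is a product of factors each depending only on the coordinates of one pair $\{\sigma_j,\sigma_{j+i}\}$ and is trivial when either argument lies in the radical) that the multiplication decouples across the tensor factors. Two cosmetic slips: with your $\tilde f$ the antisymmetrization is $\alpha_i^{-1}$ rather than $\alpha_i$, and the commutation relation comes out as $vu=\zeta_p uv$ rather than $uv=\zeta_p vu$; both are harmless, since $\alpha_i^{-1}$ is congruent to $\alpha_i$ (swap $\sigma_j\leftrightarrow\sigma_{j+i}$ in each pair), or one may simply take $\tilde f=\prod_{j=1}^{i}\zeta_p^{a_jb_{j+i}}$. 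Your treatment of the ``in particular'' clause --- reading $i$ off the Artin--Wedderburn data and then invoking Theorem~\ref{complete} and Corollary~\ref{cor} --- is exactly what the paper intends.
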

\begin{proof}

By \cite{Y} (see also \cite[Corollary 8.2.10]{karp3}), $\C^f*(C_p)^s$ is isomorphic to a direct sum of copies of complex matrix algebras of the same dimension. That is
\begin{equation}\label{nl}
\C^f*(C_p)^s\cong \left(M_{n}(\C)\right)^{l}
\end{equation}
for some $n,l$ which, by comparing dimensions, satisfy
\begin{equation}\label{nlpr}
n^2\cdot l=p^s.
\end{equation}
The number $l$ of such copies is the dimension of the center of the semisimple algebra $\C^f*(C_p)^s$, which in turn is the order of Rad$(\alpha_f)$. Hence,
\begin{equation}\label{lRad}
l=|\text{Rad}(\alpha_f)|=|\text{Rad}(\alpha_i)|,
\end{equation}
where the rightmost equality in \eqref{lRad} follows from the fact that
radicals of congruent forms are of the same cardinality. By \eqref{radalpha} the rank of Rad$(\alpha_i)<G$ is $s-2i$, therefore
\begin{equation}\label{radcard}
|\text{Rad}(\alpha_i)|=p^{s-2i}.
\end{equation}
By \eqref{nlpr},\eqref{lRad} and \eqref{radcard} we obtain $l=p^{s-2i}$ and $n=p^{i}$. Plugging these parameters into \eqref{nl} completes the proof.
\end{proof}
\begin{remark}
With the notation of Theorem \ref{ir}, the crossed product ambient space $\C^f*(C_p)^s$ admits $p^{s-2i}$ irreducible codes (out of $2^{p^{s-2i}}$ codes all in all).
Although twisted group algebras $\C^f*(C_p)^s$ are not classical crossed products, they may be simple as can be verified by taking $s$ to be any even number and putting $i=\frac{s}{2}$ in
Theorem \ref{ir}. The corresponding $f\in Z^2((C_p)^s,\C^*)$ is called {\it non-degenerate} (see, e.g. \cite[\S 3.1]{gs} for a more general definition), and the only $\C^f*(C_p)^s$-codes
are the zero space and the entire ambient space.
\end{remark}
We end this section with a short discussion without proofs about Hamming isometry of twisted group algebras $(\F_{q^r})^f*(C_p)^s$ of such elementary abelian groups $(C_p)^s$
over finite fields $\F_{q^r}$.
Firstly, these algebras are semisimple if and only if $p\neq q$ \cite[Theorem 3.3]{AGdR}. Next, the corresponding cohomology is
$$H^2((C_p)^s,\F_{q^r}^*)=\left\{\begin{array}{ll}
1& \text{if  gcd}(p,q^r-1)=1,\\
C_p^{{s+1}\choose{2}} & \text{if }p\text{ divides }q^r-1.
\end{array}\right.$$

When gcd$(p,q^r-1)=1$ (including the modular case $p=q$), then it is obvious that there is only one Hamming isometry class, namely of the ordinary group algebra $\F_{q^r}(C_p)^s$.
The complementary case is given as follows.
\begin{theorem}
Let $p\in\Delta(q^r-1)$.
Then there is a one-to-one correspondence between the Hamming isometry classes of the twisted group algebras
$(\F_{q^r})^f*(C_p)^s$ of an elementary abelian group $(C_p)^s$ over a finite field $\F_{q^r}$, and the $\lfloor\frac{3s}{2}\rfloor+1$ isomorphism types of groups $P$ which admit a central extension
$$1\to C_p\to P\to (C_p)^s\to 1.$$
\end{theorem}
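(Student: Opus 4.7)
The plan is to use Corollary \ref{cor} to recast the Hamming isometry classification as counting $\text{Aut}((C_p)^s)$-orbits on a cohomology group, and then to identify those orbits with abstract isomorphism types of the middle group of a central extension.

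First I would reduce the coefficients. Since $(C_p)^s$ has exponent $p$, the group $H^2((C_p)^s, \F_{q^r}^*)$ is $p$-torsion. Under the hypothesis $p \mid q^r - 1$, the cyclic group $\F_{q^r}^*$ contains a unique subgroup of order $p$, call it $C_p$; the quotient $\F_{q^r}^*/C_p$ has order coprime to $p$ and therefore trivial $(C_p)^s$-cohomology. The long exact sequence associated to $1 \to C_p \to \F_{q^r}^* \to \F_{q^r}^*/C_p \to 1$ then yields an $\text{Aut}((C_p)^s)$-equivariant isomorphism $H^2((C_p)^s, C_p) \xrightarrow{\cong} H^2((C_p)^s, \F_{q^r}^*)$. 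By Corollary \ref{cor} the Hamming isometry classes are in bijection with the orbits of $\text{Aut}((C_p)^s) = \text{GL}_s(\F_p)$ on $H^2((C_p)^s, C_p)$.

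Second I would identify these orbits with abstract isomorphism types of the middle group $P$ of a central extension $1 \to C_p \to P \to (C_p)^s \to 1$. Classes in the same orbit clearly produce isomorphic $P$. Conversely, if the class is zero then $P \cong (C_p)^{s+1}$, while if it is nonzero then $|P|=p^{s+1}$ and $|\Phi(P)|=p$; since any nontrivial normal subgroup of a $p$-group meets the center, $\Phi(P)$ is itself central and is the unique central order-$p$ subgroup with elementary abelian quotient of rank $s$. Any abstract isomorphism $P_1 \cong P_2$ therefore carries $\Phi(P_1)$ onto $\Phi(P_2)$ and descends to an isomorphism of extensions up to an element of $\text{Aut}(C_p) = \F_p^*$; this residual action on $H^2$ is by scalar multiplication $\lambda$, and it is already absorbed by the scalar matrices of $\text{GL}_s(\F_p)$ (which rescale the commutator form and the power map below by $\lambda^2$ and $\lambda$ respectively, changing neither orbit invariant). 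Thus the $\text{GL}_s(\F_p)$-orbits on $H^2((C_p)^s,C_p)$ correspond exactly to isomorphism types of $P$.

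Third I would count the orbits. Universal coefficients (or a direct cocycle computation) give
\begin{equation*}
H^2\bigl((C_p)^s, C_p\bigr) \;\cong\; \text{Hom}\bigl(\Lambda^2 (C_p)^s,\, C_p\bigr) \;\oplus\; \text{Hom}\bigl((C_p)^s,\, C_p\bigr),
\end{equation*}
with the first summand carrying the alternating bicharacter $\alpha_f$ of \eqref{ssf} and the second the $p$-th power map $g \mapsto \tilde g^{\,p}$. A class is thus a pair $(\alpha,\beta)$, and the orbit of $\alpha$ under $\text{GL}_s(\F_p)$ is determined by its rank $2i$, $0 \le i \le \lfloor s/2 \rfloor$. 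Fixing $\alpha$ with radical $R$ of dimension $s-2i$ and a nondegenerate complement $W$ of dimension $2i$, the stabilizer of $\alpha$ contains $\text{Hom}(W,R) \rtimes \bigl(\text{GL}(R) \times \text{Sp}(W)\bigr)$. A short block-matrix calculation then shows that if $\beta|_R \neq 0$ one absorbs $\beta|_W$ via the $\text{Hom}(W,R)$-component, giving a single orbit, while if $\beta|_R = 0$ the functional $\beta|_W$ is classified by the transitive $\text{Sp}(W)$-action into ``zero'' and ``nonzero''. Hence each ``interior'' rank $0 < 2i < s$ contributes three orbits, while the boundary cases $i = 0$ (with $W=0$) and, for $s$ even, $i = s/2$ (with $R=0$) contribute two each. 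Summing over $i$ yields $\lfloor 3s/2 \rfloor + 1$ orbits in both parity cases, matching the claim.

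The hard part is the second step, verifying that the coarser equivalence of ``abstract isomorphism of $P$'' does not collapse distinct $\text{GL}_s(\F_p)$-orbits; the characteristic nature of $\Phi(P)$ together with the absorption of $\text{Aut}(C_p)$ into the scalars handles the abelian and the non-abelian cases uniformly, but for $p = 2$ the splitting of $H^2$ into commutator and power parts is more subtle and warrants a separate, careful check.
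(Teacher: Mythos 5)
The paper offers no proof of this theorem (it is explicitly part of ``a short discussion without proofs''), so your proposal can only be judged on its own merits. Its architecture --- Corollary \ref{cor} to reduce to $\mathrm{Aut}((C_p)^s)$-orbits on $H^2$, then matching orbits with isomorphism types of $P$, then counting via the pair (alternating form, linear part) --- is the right one, and your orbit count in the third step is correct for odd $p$ and does yield $\lfloor 3s/2\rfloor+1$. But the first step contains a genuine error. You assert that $\F_{q^r}^*/C_p$ has order prime to $p$; this holds only if $p$ divides $q^r-1$ \emph{exactly once}, whereas the hypothesis is merely $p\in\Delta(q^r-1)$. When $p^2\mid q^r-1$ the long exact sequence does not degenerate, and in fact the pushforward $H^2((C_p)^s,C_p)\to H^2((C_p)^s,\F_{q^r}^*)$ is \emph{not} an isomorphism: writing $p^a\,\|\,q^r-1$ and using universal coefficients, the map kills the summand $\mathrm{Ext}((C_p)^s,C_p)$ (already for $s=1$, the class of $C_{p^2}$ in $\mathrm{Ext}(C_p,C_p)$ pushes forward to a split extension of $C_p$ by $C_{p^a}$ for $a\ge 2$). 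So the identification of $H^2((C_p)^s,\F_{q^r}^*)$ with the group classifying central extensions $1\to C_p\to P\to(C_p)^s\to 1$ cannot be made via that natural map. The fix is to work with $H^2((C_p)^s,C_{p^a})\cong \mathrm{Ext}((C_p)^s,C_{p^a})\oplus\mathrm{Hom}(H_2((C_p)^s),C_{p^a})$ directly and check that, as a $\mathrm{GL}_s(\F_p)$-set, it has the same structure $V^*\oplus\Lambda^2V^*$ for every $a\ge 1$; this requires an argument (naturality of the universal-coefficient sequence plus an equivariant splitting), not just the assertion.

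Two further points. First, your case $p=2$ is flagged but genuinely open, and it is not merely a matter of ``checking the splitting'': for $p=2$ the degree-two class is a quadratic form refining the commutator form, the extension $0\to V^*\to H^2\to\Lambda^2V^*\to 0$ does not split $\mathrm{GL}(V)$-equivariantly, and the invariant separating the two full-rank orbits is the Arf invariant rather than the nonvanishing of $\beta|_W$. The final count happens to coincide with the odd case (each rank $0<2i<s$ still contributes three orbits: $q|_R\neq 0$; $q|_R=0$ with Arf $0$; $q|_R=0$ with Arf $1$), but this must be argued separately. Second, your remark that the residual $\mathrm{Aut}(C_p)$-action is ``absorbed by the scalar matrices'' is not right as stated (a scalar $\mu$ acts by $\mu^2$ on $\Lambda^2V^*$ and by $\mu$ on $V^*$, so one cannot realize an arbitrary simultaneous scaling $\lambda$); the correct, and sufficient, observation is simply that multiplying a class by $\lambda\in\F_p^*$ preserves each of the orbit invariants (rank of $\alpha$, vanishing of $\beta|_R$ and of $\beta|_W$, resp.\ the Arf data), so the $\mathrm{Aut}(C_p)$-action does not merge $\mathrm{GL}_s(\F_p)$-orbits.
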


\noindent{\bf Acknowledgement.} We thank O. Schnabel for his useful suggestions.

\end{document}